\def\thm@space@setup{%
  \thm@preskip=0.1in
  \thm@postskip=0in
}
\numberwithin{equation}{section}
\theoremstyle{plain}
\newtheorem{thm}{Theorem}[section]
\newtheorem{lemma}[thm]{Lemma}
\newtheorem{cor}[thm]{Corollary}
\theoremstyle{definition}
\newtheorem{defn}{Definition}[section]
\newtheorem{rem}{Remark}[section]
\theoremstyle{remark}
\newtheorem*{example}{Example}
\DeclareMathOperator{\wt}{wt}
\DeclareMathOperator{\pwt}{p\,wt}
\DeclareMathOperator{\frow}{f\,row}
\DeclareMathOperator{\fcol}{f\,col}
\DeclareMathOperator{\sgn}{sgn}
\begin{document}
\begin{center} {\Large{\sc LGV Proof of a Determinantal Theorem for TASEP Probabilities}} \\
\vspace{0.1in}
Olya Mandelshtam
 \end{center}

\begin{abstract}
The Totally Asymmetric Simple Exclusion Process (TASEP) is a non-equilibrium particle model on a finite one-dimensional lattice with open boundaries. In our earlier paper, we obtained a determinantal formula that computes the steady state probabilities of this process by the enumeration of ``Catalan alternative tableaux'', which are certain fillings of Young diagrams. Here, we present a new, more illuminating bijective proof of this determinantal formula using the Lindstr\"{o}m-Gessel-Viennot Lemma.
\end{abstract}

\section{The TASEP and Catalan alternative tableaux}


The totally asymmetric exclusion process (TASEP) is a version of the well-studied non-equilibrium particle model from statistical mechanics. In a TASEP of size $n$, particles hop to the right along a line with locations labelled 1 through $n$ such that there is at most one particle per location, and particles can enter at location 1 and exit from location $n$ with respective rates $\alpha$ and $\beta$. A state of the TASEP is described by a word $\tau = \tau_1\ldots\tau_n$ in $\{0,1\}^n$ where $\tau_i=1$ if there is a particle in location $i$, and $\tau_i=0$ otherwise. The discrete Markov process can be defined as follows, for $\tau'$ and $\tau''$ arbitrary words in $\{0,1\}^{\ast}$: at each time step, 
\[
0\tau'  \overset{\alpha}{\rightharpoonup} 1\tau' \qquad \tau'1  \overset{\beta}{\rightharpoonup} \tau'0 \qquad \tau'10\tau''  \overset{1}{\rightharpoonup} \tau'01\tau''
\]
where $X  \overset{p}{\rightharpoonup} Y$ means that the transition from $X$ to $Y$ occurs with probability $\frac{p}{n+1}$ where $n$ is the length of $X$ and $Y$. We use the notation $Pr(\tau)$ to denote the stationary probability of state $\tau$.

\begin{wrapfigure}[9]{r}{0.33\textwidth}
\centering
\includegraphics[width=0.3\textwidth]{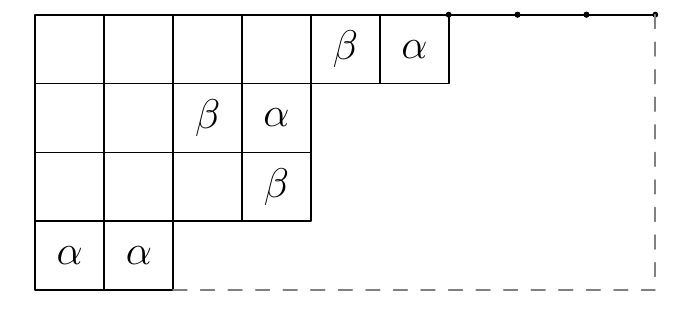}
\caption{A Catalan tableau of size $(13,4)$, type 0001001100100, and weight $\alpha^8\beta^{12}$.}
\noindent
\label{cat_example}
\end{wrapfigure}

A large number of related combinatorial objects interpret the stationary probabilities of the TASEP, and an explicit determinantal formula was given in \cite{mandelshtam} for $\Pr(\tau)$. In this work, we present a simple bijective proof for that formula that uses the Lindstr\"{o}m-Gessel-Viennot Lemma. We begin by defining the main object, the Catalan alternative tableau. 

\begin{defn}\label{catalan}
A \textbf{Catalan alternative tableau} $T$ of \textbf{size} $(n,k)$ is a Young diagram $Y=Y(T)$ contained in a $k \times n-k$ rectangle, justified to the northwest, together with a filling of the boxes of $Y$ with $\alpha$'s and $\beta$'s according to the following rules:
\begin{enumerate}[i]
\item Every box in the same column and above an $\alpha$ must be empty.
\item Every box in the same row and left of a $\beta$ must be empty.
\item Every box that does not lie above an $\alpha$ or left of a $\beta$ must contain an $\alpha$ or a $\beta$.
\end{enumerate}
\end{defn}

We associate to $T$ a lattice path $L=L(T)$ with steps south and west, which starts at the northeast corner of the rectangle and ends at the southwest corner, and follows the southeast border of $Y$. The \textbf{type} of $T$ is the word $\tau$ in $\{0,1\}^{\ast}$ that we obtain by reading $L$ from northeast to southwest and assigning a 1 to a south-step and a 0 to a west-step.

We also associate to $Y$ the partition $\lambda=\lambda(T)=(\lambda_1,\ldots,\lambda_k)$, where $\lambda_1 \geq \lambda_2 \geq \cdots \geq \lambda_k \geq 0$ and $\lambda_i$ is the number of boxes in $Y$ in the $i$th row of the rectangle. We call $\lambda$ the \textbf{shape} of $T$ and of $Y$. 

\begin{defn}
For a Catalan alternative tableau $T$ with associated shape $\lambda(T)$ and of type $\tau$ with exactly $k$ 1's, we make the equivalent definition $\lambda(\tau)=(\lambda_1,\ldots,\lambda_k)$.  Here $\lambda_i$ is the number of 0's following the $i$'th 1 in the word $\tau$. It is easy to see that $\lambda(T) = \lambda(\tau)$.
\end{defn}

\begin{defn}
For a Catalan alternative tableau $T$ of size $(n,k)$ and its associated Young diagram $Y$, an \textbf{$\alpha$-free} column is a column of the $k \times n-k$ rectangle containing $Y$ that does not contain an $\alpha$. Similarly, a \textbf{$\beta$-free} row is a row of the rectangle that does not contain a $\beta$.
\end{defn}

\begin{defn}\label{weight1}
The \textbf{weight} of a Catalan alternative tableau $T$ of size $(n,k)$ with associated Young diagram $Y$ is
\[
\wt(T) = (\alpha\beta)^n\left(\frac{1}{\alpha}\right)^{\fcol(T)}\left(\frac{1}{\beta}\right)^{\frow(T)}
\]
where $\fcol(T)$ is the number of $\alpha$-free columns and $\frow(T)$ is the number of $\beta$-free rows in $T$.
\end{defn}

\begin{example}
Figure \ref{cat_example} shows a Catalan alternative tableau of type 0001001100100, weight $\alpha^8\beta^{12}$ with $\lambda(T) = (6,4,4,2)$.
\end{example}

Catalan alternative tableaux are a specialization of ``alternative tableaux'', which are more general objects connected to the combinatorics of the PASEP (an exclusion process in which particles are allowed to hop both left and right). The alternative tableaux are related by simple bijections to several similar objects in the literature, such as permutation tableaux, staircase tableaux, and tree-like tableaux. Some references for these objects can be found in \cite{nadeau, treelike} (thus we could similarly define Catalan permutation tableaux, Catalan staircase tableaux, and Catalan tree-like tableaux). Furthermore, connections to the combinatorics of the (P)ASEP are explored in an expansive collection of slides from lectures given by Viennot in \cite{slides}. The author recommends these slides and references therein as a source for further reading on this topic. For the rest of this paper, we will refer to the Catalan alternative tableaux simply as ``Catalan tableaux''.


\begin{thm}\label{deter}
Let $\tau$ be a word in $\{0,1\}^n$ with $k$ 1's representing a state of the TASEP of length $n$ with exactly $k$ particles. Let $\lambda(\tau) = (\lambda_1,\ldots,\lambda_k)$ be the partition associated to $\tau$. Define $P(\tau) = \sum_T wt(T)$ where the sum is over Catalan tableaux $T$ of type $\tau$. Then
\[
P(\tau) = \alpha^{k+\lambda_1}\beta^n \det A^{\alpha,\beta}_{\lambda(\tau)}
\]
where $A^{\alpha,\beta}_{\lambda(\tau)} = (A_{ij})_{1 \leq i,j \leq k}$ with
\[
A_{ij} = \left({\lambda_{j+1} \choose j-i+1} + \frac{1}{\beta}{\lambda_{j+1} \choose j-i} \right) + \sum_{p=1}^{\lambda_j-\lambda_{j+1}} \left(\frac{1}{\alpha}\right)^p \left( {\lambda_{j+1}+p-1 \choose j-i} + \frac{1}{\beta}{\lambda_{j+1} + p-1 \choose j-i-1} \right).
\]
\end{thm}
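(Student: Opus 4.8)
The plan is to realize $P(\tau)$ as a weighted count of non-intersecting lattice path families, so that the Lindström–Gessel–Viennot lemma converts the path enumeration into the determinant $\det A^{\alpha,\beta}_{\lambda(\tau)}$. First I would recall the combinatorial structure of a Catalan tableau $T$ of type $\tau$: the northwest-justified Young diagram has shape $\lambda(\tau)$, and the filling rules (i)–(iii) mean that the boxes carrying $\alpha$'s and $\beta$'s are tightly constrained — each column has its $\alpha$'s, if any, only below all empty boxes, each row has its $\beta$'s only to the right of all empty boxes, and the "corner" boxes must be filled. The key observation is that such a filling is equivalent to choosing, for each row $i$, how far the $\beta$-block extends (equivalently which rows are $\beta$-free) and, for each column, how far the $\alpha$-block extends, subject to a compatibility condition along the staircase; this in turn is encoded by a monotone lattice path through the diagram. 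So I would set up $k$ source points $u_1,\dots,u_k$ and $k$ sink points $v_1,\dots,v_k$ placed along the rows/columns of the $k \times (n-k)$ rectangle so that a single path from $u_i$ to $v_j$ corresponds to a choice of "how the $i$-th feature interacts with the $j$-th column-block", with edge weights drawn from $\{1, 1/\alpha, 1/\beta\}$ matching $\wt(T)$ up to the global prefactor $(\alpha\beta)^n$, i.e. up to the $\alpha^{k+\lambda_1}\beta^n$ in the statement.

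**Identifying the path weight generating function with $A_{ij}$.**

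Next I would compute the generating function for a single path from $u_i$ to $v_j$ and check it equals $A_{ij}$. The formula for $A_{ij}$ is a sum of two regimes: the term $\binom{\lambda_{j+1}}{j-i+1} + \frac1\beta\binom{\lambda_{j+1}}{j-i}$ should correspond to paths that stay within the "inner" sub-rectangle of width $\lambda_{j+1}$ (using $\frac1\beta$ at most once, for a $\beta$-free row), while the sum $\sum_{p=1}^{\lambda_j - \lambda_{j+1}} (1/\alpha)^p(\cdots)$ accounts for paths that penetrate $p$ columns into the "fringe" between widths $\lambda_{j+1}$ and $\lambda_j$, each such column contributing a factor $1/\alpha$ (an $\alpha$-free column) and the binomials $\binom{\lambda_{j+1}+p-1}{j-i}$, $\binom{\lambda_{j+1}+p-1}{j-i-1}$ counting the remaining freedom with the optional $\beta$-free row again giving the $\frac1\beta$. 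Concretely I would argue that a path contributing to $A_{ij}$ consists of a vertical drop through $j-i$ or $j-i+1$ unit steps (choosing which rows are $\beta$-free accounts for the binomial coefficients, by a standard "choose the turning points" bijection), possibly preceded by a horizontal excursion of length $p$ into the fringe weighted $(1/\alpha)^p$. The binomial identities here are the routine part; the substantive claim is that this local path model faithfully reproduces the tableau filling rules row by row and column by column.

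**Applying LGV and matching the prefactor.**

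Once the single-path weights are identified with the $A_{ij}$, I would verify that the source and sink configuration is such that the ONLY non-intersecting family of paths connecting $\{u_i\}$ to $\{v_j\}$ is the identity matching $u_i \mapsto v_i$ (or that all permutations contribute with the correct sign and the vast majority of terms cancel) — this is what makes $\sum_T \wt(T)$, after dividing by the global monomial, equal to $\det(A_{ij})$ rather than merely the permanent. The cleanest route is to choose the geometry so that non-intersecting families are in bijection with Catalan tableaux of type $\tau$ directly: a box of $Y$ is covered by exactly one path, an $\alpha$ in column $c$ forces that column to be $\alpha$-free above (contributing the $1/\alpha$ once), and a $\beta$ in row $r$ similarly. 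Then LGV gives $P(\tau)/(\alpha^{k+\lambda_1}\beta^n) = \sum_{\sigma}\sgn(\sigma)\prod_i (\text{weight of path } u_i \to v_{\sigma(i)}) = \det A^{\alpha,\beta}_{\lambda(\tau)}$. Finally I would pin down the prefactor $\alpha^{k+\lambda_1}\beta^n$ by comparing the number of $\alpha$'s and $\beta$'s in a generic filling against $(\alpha\beta)^n$ and the $\fcol,\frow$ counts: each of the $n$ boxes on the boundary and the $k$ rows / $\lambda_1$ columns contributes a fixed power, and the residual discrepancy between $(\alpha\beta)^n(1/\alpha)^{\fcol}(1/\beta)^{\frow}$ and $\alpha^{k+\lambda_1}\beta^n \cdot (\text{weights in }A)$ is exactly the constant monomial claimed.

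**Main obstacle.**

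The hard part will be pinning down the correct placement of sources and sinks so that (a) single paths have generating function exactly $A_{ij}$ — in particular getting the index shifts $j-i+1$, $j-i$, $j-i-1$ and the width parameters $\lambda_{j+1}$ versus $\lambda_{j+1}+p-1$ to come out right — and (b) non-intersecting families biject with Catalan tableaux rather than some larger or smaller set. The asymmetry between rows (handling $\beta$, appearing linearly) and columns (handling $\alpha$, appearing as a geometric series $\sum_p (1/\alpha)^p$) suggests the path model is not left-right symmetric, and I expect that choosing the right "staircase" reference — splitting each column interaction at the level $\lambda_{j+1}$ — is the crux. Everything after that (the binomial bookkeeping, the sign analysis, the prefactor) should be mechanical.
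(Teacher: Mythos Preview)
Your high-level strategy---encode Catalan tableaux as non-intersecting path families and invoke LGV---is exactly the paper's plan, and you correctly anticipate that the crux is finding the right digraph with the right edge weights. But the concrete geometry you sketch does not match, and as you yourself flag, you have not actually found it. Two specific points of divergence from what the paper does:

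\textbf{(1) There is an intermediate single-path step.} The paper does not go directly from a tableau to a $k$-tuple of paths. It first bijects a Catalan tableau $T$ with a \emph{single} weighted lattice path $C$ (a ``Catalan path'') constrained by $Y$: reading off the columns of the $\beta$'s gives a partition $(C_1,\dots,C_k)$ with $C_i\le\lambda_i$, and the weights $\frac{1}{\beta}$ (for $C_i=0$) and $\frac{1}{\alpha}$ (for west-steps on the south boundary of $Y$) recover $\wt(T)$. Only \emph{then} is this single path converted into a disjoint $k$-path. Your attempt to read a path family straight off the filling (``a box of $Y$ is covered by exactly one path'') is a different and harder route.

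\textbf{(2) The LGV digraph is not the rectangle.} The sources and sinks are \emph{not} placed in the $k\times(n-k)$ rectangle. The paper builds a separate ``twisted'' digraph $\tilde Y$: row $i$ is a strip of $\lambda_i$ parallelograms with east and southeast edges, rows left-justified and sheared so that $e_i$ (leftmost vertex of row $i$) and $v_i$ (rightmost) line up in a way that a path $e_i\to v_j$ must take exactly $j-i+1$ southeast steps. The southeast edge in row $i$ with label $t$ carries weight $\frac{1}{\beta}$ if $t=0$, weight $(1/\alpha)^{t-\lambda_{i+1}}$ if $t>\lambda_{i+1}$, and $1$ otherwise. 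A Catalan path $(C_1,\dots,C_k)$ becomes the $k$-path whose $i$th component takes its unique diagonal step at label $C_i$; disjointness is equivalent to $C_1\ge\cdots\ge C_k$. With this setup the four cases for a single path $e_i\to v_j$ (no special edge; one $\frac{1}{\beta}$; one $(1/\alpha)^p$; both) give precisely the four binomial terms in $A_{ij}$, and the only non-intersecting matching is the identity, so LGV yields the determinant directly.

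So your difficulty in getting the index shifts $j-i+1,\,j-i,\,j-i-1$ and the widths $\lambda_{j+1}$ versus $\lambda_{j+1}+p-1$ to come out is not mere bookkeeping: it is a symptom of working on the wrong graph. Once you pass through the single-path encoding and build the parallelogram digraph, those numbers fall out mechanically from counting where $j-i+1$ diagonal steps can land among labels $0,1,\dots,\lambda_j$, with at most one landing at $0$ and at most one landing above $\lambda_{j+1}$.
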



We give the proof in Section \ref{proof}. Instead, here we demonstrate the importance of the above in connection with the TASEP. From the following theorem, we obtain that $P(\tau)$ plays a central role in computing the stationary probability of state $\tau$ of the TASEP.

\begin{thm}[Corteel, Williams (2007)\footnote{Duchi and Schaeffer in \cite{duchi}  provide the first combinatorial interpretation for probabilities of the TASEP, which is in terms of a different combinatorial object that resembles two rows of particles. However, an important advantage of this theorem is that its formulation in terms of tableaux can be extended to the setting of the ASEP, which is a more general exclusion process in which particles can hop both left and right, as well as hop in and out of the lattice from both sides. See \cite{williams2007,williams2011} for details.}] \label{cw}
Let $Z_n = \sum_{T'} wt(T')$ be the sum of the weights of all Catalan tableaux $T'$ of size $n$. Then
\begin{displaymath}
Pr(\tau) = \frac{1}{Z_n}\sum_T wt(T)
\end{displaymath}
where the sum is over Catalan tableaux $T$ of type $\tau$.
\end{thm}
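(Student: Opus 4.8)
The plan is to prove this through the Matrix Ansatz of Derrida, Evans, Hakim, and Pasquier, which is the standard route to TASEP stationary probabilities and which makes the role of the $\alpha,\beta$-weights transparent. Recall that the Matrix Ansatz reduces the problem to producing two operators $D$ and $E$ on some vector space, together with a row vector $\langle W|$ and a column vector $|V\rangle$, satisfying
\[
DE = D + E, \qquad \langle W| E = \tfrac{1}{\alpha}\langle W|, \qquad D|V\rangle = \tfrac{1}{\beta}|V\rangle.
\]
Given such data, the DEHP theorem guarantees that the matrix product $f(\tau) := \langle W|\, X_{\tau_1}\cdots X_{\tau_n}\, |V\rangle$, where $X_1 = D$ encodes a particle and $X_0 = E$ encodes a hole, is a stationary vector for the TASEP generator; consequently $Pr(\tau) = f(\tau)/Z_n'$ with normalization $Z_n' = \langle W|(D+E)^n|V\rangle = \sum_{\tau} f(\tau)$. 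The theorem will therefore follow once we realize the tableau generating function as such a matrix product, that is, once we show $f(\tau) = \sum_{T:\ \mathrm{type}(T)=\tau} wt(T)$, since then $Z_n' = \sum_\tau \sum_{T:\,\mathrm{type}(T)=\tau} wt(T) = \sum_{T'} wt(T') = Z_n$ as well.

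First I would give a transfer-matrix reading of the Catalan tableaux. A tableau $T$ of type $\tau$ is specified by its boundary lattice path $L(T)$ together with an admissible filling; traversing $L$ from the northeast corner, each south step (a $1$) will apply $D$ and each west step (a $0$) will apply $E$. I would take the basis of the underlying space to record the data of a partially built filling along the current frontier, for instance which columns are still $\alpha$-free, so that the $(i,j)$ entry of $X_{\tau_i}$ enumerates, with the correct local $\alpha,\beta$-weight, the ways to extend the filling by one boundary step. Summing over all intermediate basis vectors then collapses the matrix product into $\sum_T wt(T)$; the global prefactor $(\alpha\beta)^n$ of Definition \ref{weight1} and the corrections $(1/\alpha)^{\fcol}$, $(1/\beta)^{\frow}$ are absorbed into the entries of $D,E$ and into $\langle W|,|V\rangle$.

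The heart of the argument, and the step I expect to be the main obstacle, is verifying that these combinatorially defined operators satisfy the three relations exactly, with the weights matching term by term. The bulk relation $DE = D+E$ is the algebraic shadow of the rate-$1$ hop $\tau'10\tau''\rightharpoonup\tau'01\tau''$, and combinatorially it should reduce to a local statement at the inner corners of $Y$: by filling rule (iii) of Definition \ref{catalan}, a corner cell not forced empty by an $\alpha$ below it or a $\beta$ to its right must itself carry an $\alpha$ or a $\beta$, which is exactly the splitting recorded by $D+E$. The two boundary relations encode how an $\alpha$-free column accrues its factor $\tfrac{1}{\alpha}$ against $\langle W|$ and how a $\beta$-free row accrues $\tfrac{1}{\beta}$ against $|V\rangle$. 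Arranging the basis and the bookkeeping so that all three hold simultaneously, and so that no weight is double-counted against the $(\alpha\beta)^n$ prefactor, is the delicate part.

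Finally, I would note an alternative route more in the combinatorial spirit of this paper: rather than invoking DEHP, one can define a Markov chain directly on the set of size-$n$ Catalan tableaux whose projection $T\mapsto\mathrm{type}(T)$ intertwines with the TASEP dynamics and whose unique stationary distribution gives each $T$ the probability $wt(T)/Z_n$. Marginalizing over the tableaux of a fixed type $\tau$ then yields $Pr(\tau) = \tfrac{1}{Z_n}\sum_T wt(T)$ at once. This trades the algebraic verification above for the design of weight-preserving insertion and deletion moves on tableaux together with a check that they balance, which I expect to be the correspondingly subtle step.
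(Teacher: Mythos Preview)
The paper does not prove this theorem at all: it is stated with attribution to Corteel and Williams \cite{williams2007} and then used as a black box to derive the Corollary. So there is no proof in the paper against which to compare your proposal.

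That said, your proposed route via the DEHP Matrix Ansatz is exactly the strategy of the cited source. Corteel and Williams construct explicit matrices $D,E$ and vectors $\langle W|, |V\rangle$ (acting on a space indexed by subsets, recording which columns are still ``free''), verify the three relations $DE=D+E$, $\langle W|E=\tfrac{1}{\alpha}\langle W|$, $D|V\rangle=\tfrac{1}{\beta}|V\rangle$, and then read off the matrix product $\langle W|X_{\tau_1}\cdots X_{\tau_n}|V\rangle$ as a weighted sum over tableaux. Your outline is faithful to this, and you correctly identify the delicate point: choosing the basis and the entries so that the three relations hold on the nose while the $(\alpha\beta)^n$ prefactor and the $\fcol,\frow$ corrections are absorbed without double counting. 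Your alternative route---lifting the TASEP to a Markov chain on tableaux with stationary measure proportional to $\wt(T)$---is also in the literature (it is closer in spirit to the Duchi--Schaeffer approach mentioned in the footnote, and to later work of Corteel--Williams for the full ASEP), and trades the algebraic check for the design of balanced local moves, as you note.
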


Furthermore, from Derrida et.\ al.\ \cite{derrida}, we have the following formula.
\begin{equation}\label{Z}
Z_n = (\alpha\beta)^n \sum_{p=1}^n \frac{p}{2n-p} {2n-p \choose n} \frac{\alpha^{-p-1}-\beta^{-p-1}}{\alpha^{-1}-\beta^{-1}}.
\end{equation}


Thus we obtain the final result.

\begin{cor}
The stationary probability of state $\tau$ of a TASEP of size $n$ is
\[
Pr(\tau) = \frac{\alpha^{k+\lambda_1}\beta^n}{Z_n} \det A^{\alpha,\beta}_{\lambda}.
\]
where $\lambda=\lambda(\tau)$ and $Z_n$ is of Equation \ref{Z}.
\end{cor}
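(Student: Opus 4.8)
The plan is to derive the corollary directly from Theorem~\ref{cw} and Theorem~\ref{deter}, which together already supply every ingredient; no further combinatorial construction is required. First I would recall that Theorem~\ref{cw} expresses the stationary probability as
\[
Pr(\tau) = \frac{1}{Z_n}\sum_T \wt(T),
\]
where the sum runs over Catalan tableaux $T$ of type $\tau$. By the definition $P(\tau) = \sum_T \wt(T)$ introduced in the statement of Theorem~\ref{deter}, this numerator is exactly $P(\tau)$, so that $Pr(\tau) = P(\tau)/Z_n$.

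Next I would substitute the determinantal evaluation of $P(\tau)$ furnished by Theorem~\ref{deter}, namely $P(\tau) = \alpha^{k+\lambda_1}\beta^n \det A^{\alpha,\beta}_{\lambda(\tau)}$, into this quotient. Writing $\lambda = \lambda(\tau)$, this yields at once
\[
Pr(\tau) = \frac{\alpha^{k+\lambda_1}\beta^n}{Z_n}\det A^{\alpha,\beta}_{\lambda},
\]
which is precisely the claimed formula, with $Z_n$ the normalizing constant given in closed form by Equation~\ref{Z}.

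It is worth recording one internal consistency check to make the argument self-contained. Since $Z_n = \sum_{T'}\wt(T')$ ranges over all Catalan tableaux of size $n$, partitioning this sum according to the type of each tableau gives $Z_n = \sum_{\tau}\sum_{T:\,\operatorname{type}(T)=\tau}\wt(T) = \sum_{\tau} P(\tau)$, where $\tau$ runs over $\{0,1\}^n$. Hence $\sum_{\tau} Pr(\tau) = \tfrac{1}{Z_n}\sum_{\tau}P(\tau) = 1$, confirming that the formula genuinely defines a probability distribution; applying Theorem~\ref{deter} term by term also shows that the closed form of Equation~\ref{Z} due to Derrida et al.\ must agree with the tableaux enumeration $\sum_{\tau}\alpha^{k+\lambda_1}\beta^n\det A^{\alpha,\beta}_{\lambda(\tau)}$.

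The only real difficulty in this chain lies in Theorem~\ref{deter}, whose proof via the Lindstr\"om--Gessel--Viennot lemma is carried out separately in Section~\ref{proof}; the corollary itself needs nothing beyond the two substitutions above. The single point demanding care is the bookkeeping of the prefactor $\alpha^{k+\lambda_1}\beta^n$ — in particular that $\lambda_1$ denotes the first part of $\lambda(\tau)$ and $k$ the number of $1$'s in $\tau$ — so that the exponents transfer intact from $P(\tau)$ to the stated probability.
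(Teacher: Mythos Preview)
Your proof is correct and follows exactly the same approach as the paper, which simply states that the corollary follows immediately from Theorem~\ref{deter} and Theorem~\ref{cw}. Your write-up is in fact more detailed than the paper's one-line proof; the added consistency check about $\sum_\tau Pr(\tau)=1$ is unnecessary for the argument but harmless.
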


\begin{proof}
The Corollary follows immediately from Theorem \ref{deter} and Theorem \ref{cw}.
\end{proof}

\section{Bijection from Catalan tableaux to weighted Catalan paths}

In this section, we present a canonical bijection from a filling of the Catalan tableau with associated Young diagram $Y$ to a lattice path on a Young diagram of the same shape. Viennot describes an analogous bijection from Catalan permutation tableaux (which are in bijection to the Catalan tableaux) to weighted lattice paths in \cite{viennot}. We reformulate this bijection for the Catalan tableaux and assign the weights to the resulting lattice path in a particular way.






\subsection{Weighted Catalan path}
Let $Y$ be a Young diagram contained within a $k \times n-k$ rectangle. A lattice path \emph{constrained by} $Y$ is a path that begins in the northeast corner and ends at the southwest corner of rectangle, and takes the steps south and west in such a way that it never crosses the southeast boundary of $Y$. 

\begin{figure}[h]
\centering
\includegraphics[width=\textwidth]{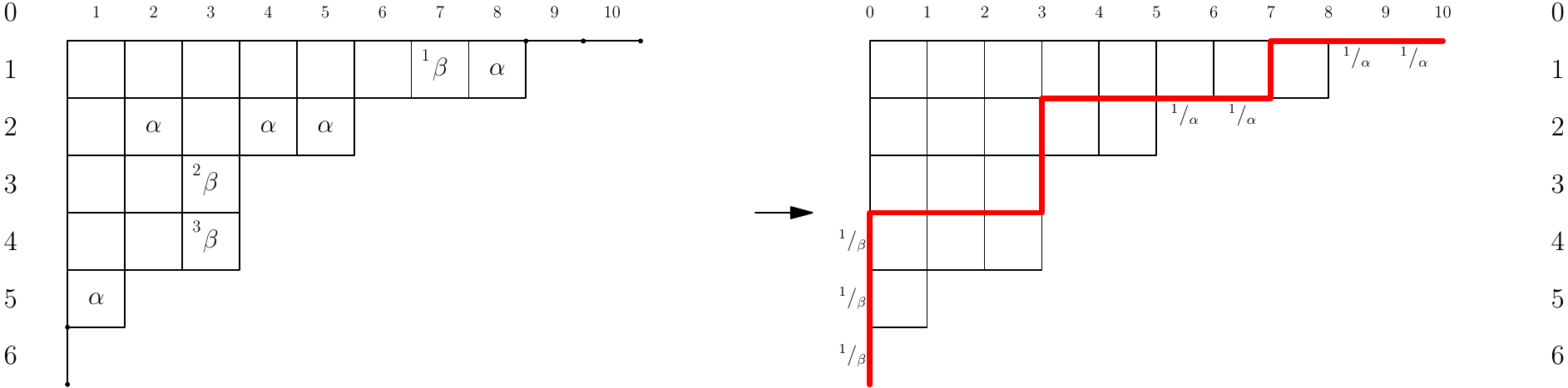}
\caption{A Catalan tableau $T$ and its corresponding weighted Catalan path $C$ on a tableau of shape $\lambda=(8,5,3,3,1,0)$ and weight $\wt(T)=\alpha^{12}\beta^{13}$. On the left, the $\beta$'s are labeled such as to generate the partition $(7,3,3,0,0,0)$ where the column containing the $i$th beta is the length of the $i$th row of the partition. This partition is precisely the shape of the path in the figure on the right. The path weight of $C$ is $\pwt(C)=\left(\frac{1}{\beta}\right)^3\left(\frac{1}{\alpha}\right)^4$, and so $\wt(C) = (\alpha\beta)^{16}\left(\frac{1}{\beta}\right)^3\left(\frac{1}{\alpha}\right)^4 = \wt(T)$.}
\noindent
\label{cat_path}
\end{figure}

\begin{defn}
A \emph{Catalan path} $C$ of size $(n,k)$ with associated Young diagram $Y$ is a lattice path constrained by $Y$ with the following weights on its edges:
\begin{itemize}
\item A south edge that coincides with the east border of the rectangle receives a $\frac{1}{\beta}$.
\item A south edge that does not coincide with the east border of the rectangle receives a 1.
\item A west edge that coincides with the south boundary of Y receives a $\frac{1}{\alpha}$.
\item A west edge that does not coincide with the south boundary of Y receives a 1.
\end{itemize}
\end{defn}

\begin{defn}
The \textbf{path weight} $\pwt(C)$ of the Catalan path $C$ is the product of the weights on its edges. We call the total weight of the Catalan path $\wt(C)$, with $\wt(C)=(\alpha\beta)^n\pwt(C)$.
\end{defn}

The following lemma describes a natural correspondence between the Catalan tableaux and the Catalan paths.

\begin{lemma}
There is a weight-preserving bijection between the set of Catalan paths of size $(n,k)$ constrained by the Young diagram $Y$ to the set of Catalan tableaux of size $(n,k)$ of type $\tau$ such that $\lambda(\tau)$ is the same partition that describes $Y$.
\end{lemma}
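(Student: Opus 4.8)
The plan is to build the bijection from the $\beta$'s of the tableau and then check that it preserves weight; as the underlying correspondence is Viennot's (\cite{viennot}), reformulated, the substance of the lemma lies in the weight computation. By rules (ii) and (i) of Definition \ref{catalan}, a Catalan tableau $T$ with diagram $Y = Y(T)$ of shape $\lambda = \lambda(\tau)$ has at most one $\beta$ per row and at most one $\alpha$ per column. Let $\mu = \mu(T)$ be the partition obtained by listing the column indices of the $\beta$'s of $T$ in weakly decreasing order and padding with $0$'s to length $k$; since every $\beta$ lies inside $Y$, a short counting argument gives $\mu \subseteq \lambda$, so the lattice path $\Phi(T)$ that bounds the Young diagram of shape $\mu$ inside the $k\times(n-k)$ rectangle never crosses the southeast border of $Y$ and is a Catalan path. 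Conversely, from a Catalan path $C$ one reads off the partition $\mu\subseteq\lambda$ it encloses and fills $Y$ by the procedure forced by (i)--(iii): a cell is empty exactly when it lies above an $\alpha$ or left of a $\beta$ and is filled otherwise, which, together with the bound of one $\alpha$ per column, determines the entire filling from the $\beta$-profile $\mu$. One checks that the result is a Catalan tableau $T$ with $\mu(T) = \mu$ --- this is Viennot's reconstruction --- so $\Phi$ is a bijection.

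For weight preservation, since $\wt(T) = (\alpha\beta)^n(1/\alpha)^{\fcol(T)}(1/\beta)^{\frow(T)}$ (Definition \ref{weight1}) and $\wt(C) = (\alpha\beta)^n\pwt(C)$, it suffices, for $C = \Phi(T)$, to prove two counting identities: (a) the number of south edges of $C$ weighted $1/\beta$ equals $\frow(T)$; and (b) the number of west edges of $C$ weighted $1/\alpha$ equals $\fcol(T)$. Identity (a) is immediate: $C$ makes such a south step exactly in the rows $i$ with $\mu_i = 0$, of which there are $k - \ell(\mu)$; since $\ell(\mu)$ is the number of $\beta$'s of $T$ and there is at most one per row, $k - \ell(\mu) = \frow(T)$. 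For (b), compare $C$ with the path $L(T)$ tracing the southeast border of $Y$: each uses exactly one west step crossing the vertical line $x = j$, for every $j\in\{1,\dots,n-k\}$, at height $\mu'_j$ for $C$ and at height $\lambda'_j$ for $L(T)$ (conjugate partitions), and these two steps coincide --- i.e.\ $C$'s west step there lies on the south boundary of $Y$ --- precisely when $\mu'_j = \lambda'_j$. A count of the cells of column $j$ of $Y$, sorted according to whether a cell is empty because it is left of a $\beta$, empty because it is above the column's $\alpha$, holds a $\beta$, or is that $\alpha$, shows that column $j$ contains an $\alpha$ if and only if $\lambda'_j > \mu'_j$. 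Hence the number of $1/\alpha$-weighted west edges of $C$, which equals $\#\{j : \mu'_j = \lambda'_j\}$, equals the number of $\alpha$-free columns, $\fcol(T)$; with (a) this yields $\wt(\Phi(T)) = \wt(T)$.

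The main obstacle is the structural input used twice above: that rules (i)--(iii), given the positions of the $\beta$'s (equivalently, given $\mu$), leave no freedom in the rest of the filling, and that the cells of each column distribute exactly as claimed. The delicate points are that an $\alpha$ need not lie at the bottom of its column and the $\beta$-containing rows need not form an initial segment, so I would establish the column decomposition --- and with it the consistency and uniqueness of the reconstruction --- by induction on $k$, peeling off one south step of the path, equivalently one row of $T$, at a time; the $n-k-\lambda_1$ columns of the rectangle strictly to the right of $Y$ are handled at the outset, where $C$ and $L(T)$ both run along the top edge and $\mu'_j = \lambda'_j = 0$ holds automatically.
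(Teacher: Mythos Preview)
Your bijection is the same as the paper's --- both encode a tableau by the multiset of column indices of its $\beta$'s, sorted into a partition $\mu\subseteq\lambda$, and both invoke Viennot's reconstruction for the inverse. Where you genuinely diverge is in the weight check for the $1/\alpha$ factor. The paper argues \emph{row by row}: it singles out those south steps $C_{i_j}$ of the path that land on the southern boundary of $Y$ and observes that the run of west edges following such a step contributes $(1/\alpha)^{C_{i_j}-\lambda_{i_j+1}}$, matching the block of $\alpha$-free columns to the right of the next drop in $\lambda$. You instead argue \emph{column by column}: the west edge of $C$ crossing $x=j$ sits at depth $\mu'_j$, the boundary of $Y$ at depth $\lambda'_j$, and your cell classification shows that $\lambda'_j-\mu'_j$ is exactly the number of cells in column $j$ that are either the column's $\alpha$ or lie above it, whence $\lambda'_j=\mu'_j$ iff column $j$ is $\alpha$-free. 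This gives a one-to-one matching between $1/\alpha$-weighted west edges and $\alpha$-free columns, which is a bit cleaner and more structural than the paper's run-length bookkeeping; conversely, the paper's row-wise grouping dovetails with the later row-by-row decomposition into the $k$-path on $\tilde{Y}$.

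One point to tighten: you write ``positions of the $\beta$'s (equivalently, given $\mu$)'', but $\mu$ records only the \emph{columns} of the $\beta$'s, not their rows, so the equivalence is itself a claim to be proved. Your inductive sketch (peel off one row at a time) is the right idea, and the column decomposition you already established essentially forces the bottom $\beta$ of each column and hence the row placement; but as written this is the thinnest step of your argument and deserves a sentence making explicit why two tableaux with the same $\mu$ must have their $\beta$'s in the same cells.
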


\begin{proof}
Let a Catalan path $C$ of size $(n,k)$ constrained by a Young diagram $Y$ of shape $\lambda=(\lambda_1,\ldots,\lambda_k)$ be described by the partition $(C_1,\ldots,C_k)$ that is weakly smaller than $\lambda$. In other words, $C_1 \geq C_2 \cdots \geq C_k$ and $0 \leq C_i \leq \lambda_i$, and $C_i$ is the position of the south step of the lattice path that occurs in row $i$ of the $k \times n-k$ rectangle. 

We map $(C_1,\ldots,C_k)$ to a Catalan tableau $T$ as follows. First we label the columns of the $k \times n-k$ rectangle with 1 through $n-k$ from left to right. Then, for $i$ in $\{1,\ldots,k\}$, if $C_i>0$, we place a $\beta$ in column $C_i$ of $Y$ such that it is the south-most position possible with the condition that there is at most one $\beta$ per row. We now place an $\alpha$ in the lowest possible $\beta$-free row of every column. (Consequently, a column does not receive an $\alpha$ if and only if it has zero $\beta$-free rows.) It is easy to check that this construction results in a valid Catalan tableau. 

Conversely, to map a Catalan tableau $T$ to the partition $(C_1,\ldots,C_k)$, we label the $\beta$'s in the filling of $Y$ from left to right and top to bottom with $1,\ldots,\ell$ where $\ell$ is the number of $\beta$'s, and we let $C_i$ be the label of the column containing the $i$'th beta. We let $C_{\ell+1}=\cdots=C_k=0$. In this construction, the labels on the $\beta$'s decrease as the labels on the columns decrease, as in the left image of Figure \ref{cat_path}, so $C_i \geq C_{i+1}$. The partition $(C_1,\ldots,C_k)$ is then directly mapped to the Catalan path $P$.

Now we show the weight $\wt(C)$ of the Catalan path $C$ is the same as the weight $\wt(T)$ of the Catalan tableau $T$.
Let $\{C_{i_1},\ldots,C_{i_m}\}$ be the subset of $\{C_1,\ldots,C_k\}$ that represents the south steps that touch the south boundary of $Y$. Then the contribution of the $\left(\frac{1}{\alpha}\right)$ to the weight of the path is $\prod_{j=1}^m \left(\frac{1}{\alpha}\right)^{C_{i_j}-\lambda_{i_j+1}}$. This is because, for each $j$, if $C_{i_j}$ touches the south boundary of $Y$, we know that there are zero $\beta$-free rows in the column $i_j$. In particular, no column of the Catalan tableau between $\lambda_{i_j+1}$ and $i_j$ can contain an $\alpha$, so every west-edge of the path in those columns carries a weight of $\frac{1}{\alpha}$. It follows that both the Catalan tableau and the Catalan path have the same power of $\frac{1}{\alpha}$ contributed to their weight.

As for the factor of $\frac{1}{\beta}$, by the construction of the path, it must be $\left(\frac{1}{\beta}\right)^t$, where $t$ is the number of $C_j$ that equal 0. But we already know that if $C_j=0$, it means that row $j$ of the Catalan tableau is $\beta$-free, and so contributes a $\frac{1}{\beta}$ to the weight of the tableau. Thus $\wt(C)=\wt(T)=(\alpha\beta)^n\left(\frac{1}{\beta}\right)^t\prod_{j=1}^m \left(\frac{1}{\alpha}\right)^{C_{i_j}-\lambda_{i_j+1}}$ where $t,i_1,\ldots,i_m$ were defined in the above paragraphs.
\end{proof}



\section{Bijection from a weighted lattice path on a Young diagram of $k$ rows to $k$ disjoint weighted paths}

Let $D$ be a digraph where we assume finitely many paths between any two vertices. Let $\textbf{e}=(e_1,\ldots,e_k)$ and $\textbf{v}=(v_1,\ldots,v_k)$ be $k$-tuples of vertices of $D$. Let every edge of $D$ be assigned a weight.

\begin{defn} A \textbf{$k$-path} from $\textbf{e}$ to $\textbf{v}$ is a $k$-tuple of paths $\textbf{P}(\textbf{e},\textbf{v})=(P_1,\ldots,P_k)$ where for some fixed $\pi \in S_k$, $P_i$ is a path from $e_i$ to $v_{\pi(i)}$. The $k$-path $\textbf{P}$ is \textbf{disjoint} if the paths $P_i$ are all vertex disjoint. 
\end{defn}

\begin{defn}
The \textbf{weight} $\wt(P_i)$ of a path $P_i$ is the product of the weights on its edges. The weight $\wt(\textbf{P})$ of the $k$-path $\textbf{P}=(P_1,\ldots,P_k)$ is the sum of the weights of its components, in other words $\wt(\textbf{P}) = \sum_{i=1}^k \wt(P_i)$.
\end{defn}

\begin{thm}[Lingstr\"{o}m, Gessel-Viennot]\label{LGV}
Let $D$ be a digraph, and let $\textbf{u}=(u_1,\ldots,u_k)$ and $\textbf{y}=(y_1,\ldots,y_k)$ be $k$-tuples of vertices of $D$. Let $\mathcal{P}_{ij}$ be the set of paths from $u_i$ to $y_j$. Define $w_{ij}=\sum_{p \in \mathcal{P}_{ij}} \wt(p)$. Then
\[
\sum_{\pi \in S_k} \sum_\textbf{P} \sgn(\pi) \wt(\textbf{P}) = \det \left( w_{ij} \right)_{1 \leq i,j \leq k}.
\]
where $\textbf{P}$ ranges over all disjoint $k$-paths $\textbf{P}(\textbf{u},\pi(\textbf{y}))$.
\end{thm}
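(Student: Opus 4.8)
The plan is to prove the Lindström-Gessel-Viennot identity by the standard sign-reversing involution argument. First I would expand the right-hand side $\det(w_{ij})$ by the Leibniz formula as $\sum_{\pi \in S_k} \sgn(\pi) \prod_{i=1}^k w_{i\,\pi(i)}$. Unpacking each $w_{i\,\pi(i)} = \sum_{p \in \mathcal{P}_{i\,\pi(i)}} \wt(p)$ and distributing the product over the sum turns this into a signed sum over all (not necessarily disjoint) $k$-tuples of paths $\mathbf{P} = (P_1,\ldots,P_k)$ where $P_i$ runs from $u_i$ to $y_{\pi(i)}$, each such $k$-path weighted by $\sgn(\pi)\prod_i \wt(P_i)$. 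So the identity to establish becomes
\[
\sum_{\pi \in S_k}\ \sum_{\mathbf{P}} \sgn(\pi) \prod_{i=1}^k \wt(P_i) = \sum_{\pi \in S_k}\ \sum_{\mathbf{P}\ \mathrm{disjoint}} \sgn(\pi) \prod_{i=1}^k \wt(P_i),
\]
where on the left $\mathbf{P}$ ranges over all $k$-paths from $\mathbf{u}$ to $\pi(\mathbf{y})$ and on the right only over the disjoint ones. I should note a small mismatch in bookkeeping: the theorem as stated writes $\wt(\mathbf{P})$ (defined as a \emph{sum} of component weights) where the Leibniz expansion naturally produces a \emph{product} $\prod_i \wt(P_i)$; I would adopt the product reading, which is the form for which the lemma is true and which is what the downstream application requires.

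The heart of the argument is to show that the contributions of all non-disjoint $k$-paths cancel in pairs. To this end I would construct a sign-reversing involution $\Phi$ on the set of non-disjoint $k$-paths. Given a non-disjoint $\mathbf{P}$ with underlying permutation $\pi$, let $i$ be the smallest index such that $P_i$ shares a vertex with some other path, and let $v$ be the first such intersection vertex encountered along $P_i$; among the paths passing through $v$, let $j>i$ be the smallest index of a path meeting $P_i$ at $v$. I then define $\Phi(\mathbf{P})$ by swapping the tails of $P_i$ and $P_j$ from $v$ onward: the new $P_i'$ follows $P_i$ up to $v$ and then $P_j$ to $y_{\pi(j)}$, while $P_j'$ follows $P_j$ up to $v$ and then $P_i$ to $y_{\pi(i)}$. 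This produces a new $k$-path whose underlying permutation is $\pi$ composed with the transposition $(i\,j)$, hence of opposite sign, while the multiset of edges—and therefore the total weight $\prod_i \wt(P_i)$—is unchanged. The key checks are that $\Phi$ is well-defined (the canonical choice of $i$, $v$, $j$ is preserved under the swap so that $\Phi$ is genuinely an involution, $\Phi^2 = \mathrm{id}$) and that $\Phi$ has no fixed points among non-disjoint $k$-paths.

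I expect the main obstacle to be verifying that $\Phi$ is a well-defined involution, i.e. that applying the same canonical rule to $\Phi(\mathbf{P})$ recovers $\mathbf{P}$. The subtlety is that the swap must not alter the data $(i,v,j)$ that the rule selects: because the tails are exchanged only at and after $v$, and because $v$ is chosen as the first intersection along the lowest-indexed offending path, the prefixes $P_1,\ldots,P_{i-1}$ and the portions of $P_i,P_j$ up to $v$ are identical in $\mathbf{P}$ and $\Phi(\mathbf{P})$; one must argue carefully that the first offending index, the first intersection point, and the partner index are each recomputed identically. Once this is secured, every non-disjoint term is matched with an oppositely-signed term of equal weight, those contributions sum to zero, and only the disjoint $k$-paths survive, yielding exactly the claimed determinantal identity.
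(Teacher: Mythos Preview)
Your argument is the standard, correct proof of the Lindstr\"om--Gessel--Viennot lemma via a sign-reversing tail-swap involution. There is nothing to compare against, however: the paper does not supply a proof of this theorem at all. It is quoted as a classical result (with attribution and the reference \cite{lgv}) and then applied; no argument is given.

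Your side remark about the bookkeeping is well taken. The paper's Definition preceding the theorem sets $\wt(\textbf{P}) = \sum_{i} \wt(P_i)$, whereas the Leibniz expansion of $\det(w_{ij})$ produces $\prod_i \wt(P_i)$; the determinantal identity holds with the product, not the sum, and the application in Section~\ref{proof} in fact uses the product reading. So the ``sum'' in that definition is a slip in the paper, and your decision to work with the product is the right one.
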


In this section, we provide a bijection from a Catalan path on a Young diagram $Y$ to a disjoint $k$-path on a corresponding digraph with appropriately assigned weights on the edges. Ignoring the weights, we obtain the canonical bijection from lattice paths constrained by a Young diagram to disjoint $k$-paths.\footnote{We can treat the Catalan path and the Young diagram that contains it simply as nested lattice paths. The duality of nested lattice paths with disjoint $k$-paths is known in the literature and is described as the Kreweras-Narayana determinant. In particular, this duality is described in slides by Viennot \cite{slides2}, and the case for $\alpha=\beta=1$ of our problem is solved therein.} 
This bijection allows us to enumerate the Catalan paths as an application of the Lindstr\"{o}m-Gessel-Viennot Lemma.


Let $C$ be a Catalan path of size $(n,k)$ with associated Young diagram $Y$ of shape $\lambda=(\lambda_1,\ldots,\lambda_k)$. We label the vertical lines in the $k \times n-k$ rectangle from left to right with $\{0,1,\ldots\,n-k\}$. Let $C$ be described by the partition $(C_1,\ldots,C_k)$ where $C_i$ is the label of the south-step of $C$ in row $i$. Since $C$ consists of only south- and west- steps, we necessarily have $C_1 \geq \cdots \geq C_k \geq 0$. 


Now we define a twisted tableau $\tilde{Y}$ from $Y$ as follows: for $1 \leq i \leq k$, draw a row of $\lambda_i$ parallelograms consisting of east and southeast edges, and left-justify the rows as in the middle image of Figure \ref{LGV_bijection}. In each row, we label the southeast edges of the parallelograms with $0,1,2,\ldots$ from left to right. We put weights on the edges of the parallelograms in the following way: 
\begin{itemize}
\item the edges with label 0 receive a $\frac{1}{\beta}$,
\item otherwise if an edge in row $i$ has label $t$ and $t>\lambda_{i+1}$, the edge receives a $\left(\frac{1}{\alpha}\right)^{t-\lambda_{i+1}}$.
\end{itemize}
Every other edge receives a weight of 1. 

We mark the left-most vertices of each row of parallelograms as the $k$ special points $e_1,\ldots,e_k$ from top to bottom. We also mark the right-most vertices of each row of parallelograms as the $k$ special points $v_1,\ldots,v_k$. Finally, we convert $\tilde{Y}$ into a digraph by directing all its edges from northwest to southeast. We denote by $\mathcal{P}_{ij}$ the set of weighted paths from $e_i$ to $v_j$.


\begin{figure}[h]
\centering
\includegraphics[width=\textwidth]{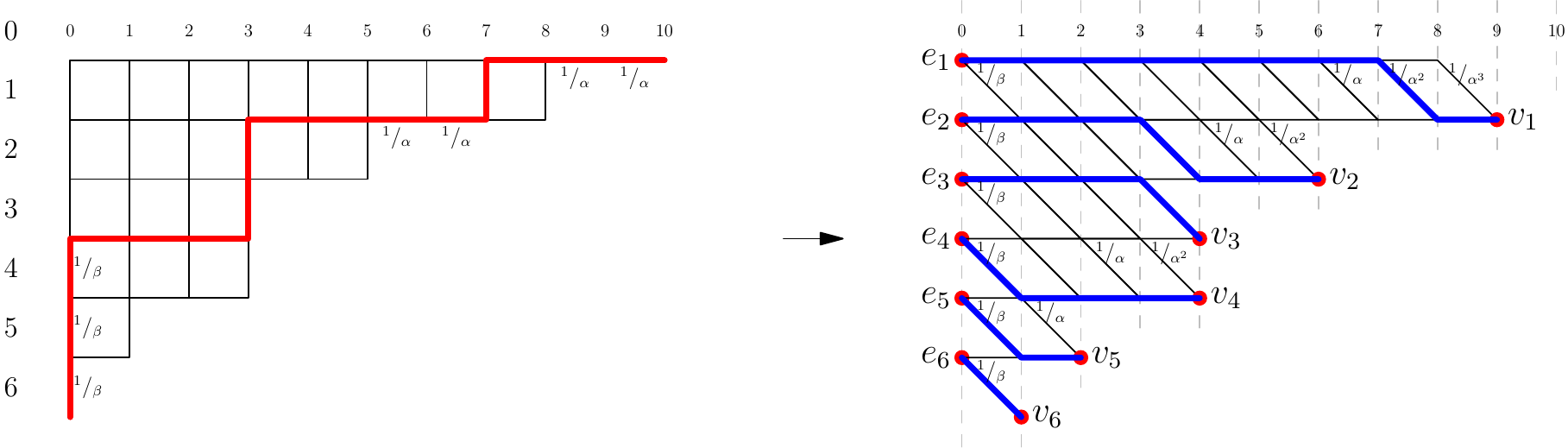}
\caption{A Catalan path represented by partition $(7,3,3,0,0,0)$ on a Young Diagram with rows $(C_1,\ldots,C_6)=(1,\ldots,6)$ and the corresponding set of paths $\{p_{ii}\}_{1 \leq i \leq 6}$ where $p_{ii} \in \mathcal{P}_{ii}$ has a single diagonal step at edge labeled $C_i$. This Catalan path is the same one as in Figure \ref{cat_path}.}
\noindent
\label{LGV_bijection}
\end{figure}

We map the partition $(C_1,\ldots,C_k)$ on $Y$ to a $k$-path $\textbf{P}(C) = \textbf{P}(\textbf{e},\textbf{v})$ on $\tilde{Y}$ in the following way. We write $\textbf{P}(C) = (p_{11},\ldots,p_{kk})$ where $p_{ii} \in \mathcal{P}_{ii}$. For each $i$ in $\{1,\ldots,k\}$, we define $p_{ii}$ as follows: let the single diagonal step in $p_{ii}$ be the southeast edge in row $i$ with label $C_i$. The rest of the edges in $p_{ii}$ must necessarily be the horizontal edges that connect that diagonal step from $e_i$ to $v_i$. From Figure \ref{LGV_bijection}, it is easy to see this is a one to one correspondence. 

\begin{rem} \label{segment}
It is important to note that the segment of $C$ that lies in the columns $\{\lambda_1+1,\ldots,n-k\}$ is ignored in the construction of $\textbf{P}(C)$. This is permissible since any Catalan path constrained by $\lambda$ must necessarily have the same such segment. Thus it suffices to simply adjust the weight of $\textbf{P}(C)$ by the weight contribution of that segment, which is $\left(\frac{1}{\alpha}\right)^{n-k-\lambda_1}$. 
\end{rem}

\begin{lemma} Based on the construction of the $k$-path $\textbf{P}(C)$ above, we claim that (i.) $\textbf{P}(C)$ is disjoint if and only if $C_1 \geq \cdots \geq C_k$ and (ii.) $\pwt(C) = \left(\frac{1}{\alpha}\right)^{n-k-\lambda_1} \wt(\textbf{P}(C))$.

\end{lemma}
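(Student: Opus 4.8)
The two claims have rather different flavours, and I would establish them in order.

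\emph{Part (i): disjointness $\Longleftrightarrow$ monotonicity.} The plan is to pin down exactly which vertices of $\tilde{Y}$ each $p_{ii}$ visits. By its very construction, $p_{ii}$ runs horizontally along the top line of the $i$-th row of parallelograms of $\tilde{Y}$, takes its single southeast step (the edge labelled $C_i$), and then runs horizontally along the bottom line of that row to $v_i$. Since the rows of $\tilde{Y}$ are left-justified and stacked, the bottom line of row $i$ is literally the same as the top line of row $i+1$. Hence $p_{ii}$ and $p_{jj}$ can share a vertex only when $j\in\{i-1,i+1\}$, and when $j=i+1$ only along that one common line. On the common line, $p_{ii}$ occupies the columns from the landing point of its southeast step rightward --- i.e.\ columns $C_i+1$ and beyond --- while $p_{i+1,i+1}$ occupies the columns from the left end up to the point where its southeast step departs --- i.e.\ columns $0$ through $C_{i+1}$. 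These two ranges are disjoint exactly when $C_i+1>C_{i+1}$, i.e.\ when $C_i\geq C_{i+1}$; letting $i$ range over $1,\ldots,k-1$ yields (i). The only point requiring care is that the diagonal step is a \emph{southeast} step: it shifts the path one column to the right as it drops a level, and this is precisely what turns the naive inequality ``$C_i>C_{i+1}$'' into the desired ``$C_i\geq C_{i+1}$''.

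\emph{Part (ii): the weight identity.} Here I would compute both sides explicitly and match them. Since the weight of a $k$-path is the product of the weights of its component paths and every edge of $p_{ii}$ other than its unique southeast step is a weight-$1$ horizontal edge, we have $\wt(\textbf{P}(C)) = \prod_{i=1}^{k}\wt(p_{ii})$, where $\wt(p_{ii})$ is just the weight that $\tilde{Y}$ assigns to the southeast edge labelled $C_i$ in row $i$. By the weighting rule on $\tilde{Y}$ --- and with the convention $\lambda_{k+1}=0$ --- this equals $\tfrac1\beta$ when $C_i=0$, equals $\left(\tfrac1\alpha\right)^{C_i-\lambda_{i+1}}$ when $C_i>\lambda_{i+1}$, and equals $1$ otherwise; consequently $\wt(\textbf{P}(C)) = \left(\tfrac1\beta\right)^{\#\{\,i\,:\,C_i=0\,\}}\left(\tfrac1\alpha\right)^{\sum_{i=1}^{k}\max(0,\,C_i-\lambda_{i+1})}$.

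For the other side, $\pwt(C)$ is the product of the weights on the edges of $C$, and the only edges of a path constrained by $Y$ that carry a weight other than $1$ are its south steps on the east border of the rectangle --- the south step of $C$ in row $i$ being of this kind exactly when $C_i=0$ --- and its west steps on the southeast boundary of $Y$. Among the latter, Remark \ref{segment} isolates the $n-k-\lambda_1$ steps lying in columns $\lambda_1+1,\ldots,n-k$, which together contribute $\left(\tfrac1\alpha\right)^{n-k-\lambda_1}$; the remaining ones are, for each $i$, the west steps that $C$ takes immediately after its south step in row $i$ and that lie in columns $\lambda_{i+1}+1,\ldots,C_i$, and --- because $C$ is constrained by $Y$, so that $C_{i+1}\leq\lambda_{i+1}$ --- there are exactly $\max(0,\,C_i-\lambda_{i+1})$ of these. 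Multiplying all the factors gives $\pwt(C) = \left(\tfrac1\alpha\right)^{n-k-\lambda_1}\left(\tfrac1\beta\right)^{\#\{\,i\,:\,C_i=0\,\}}\left(\tfrac1\alpha\right)^{\sum_{i=1}^{k}\max(0,\,C_i-\lambda_{i+1})} = \left(\tfrac1\alpha\right)^{n-k-\lambda_1}\wt(\textbf{P}(C))$, which is (ii). One could instead feed the weight formula for $\pwt(C)$ obtained in the previous section into this comparison, but counting the edges of $C$ directly keeps the role of Remark \ref{segment} transparent.

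I expect the bulk of the care to lie in (ii) rather than (i). Part (i) is a short geometric observation once the picture of $\tilde{Y}$ is firmly in hand, the only delicate spot being the off-by-one that yields ``$\geq$'' instead of ``$>$''. In (ii) one must make sure that $\prod_i\wt(p_{ii})$ reproduces the non-ignored part of $\pwt(C)$ with no edge of $C$ over- or under-counted; this rests on the two facts that the only weight-$\ne 1$ edges of a constrained path are its south steps on the east border and its west steps on the southeast boundary of $Y$, and that the constraint $C_{i+1}\leq\lambda_{i+1}$ collapses the count of the latter in row $i$ to $\max(0,\,C_i-\lambda_{i+1})$. After that, matching against the designed weights on $\tilde{Y}$ and accounting for the ignored columns $\lambda_1+1,\ldots,n-k$ via Remark \ref{segment} is mechanical.
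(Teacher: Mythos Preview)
Your proof is correct and follows essentially the same approach as the paper: for (i) you use the same geometric observation that $C_i\geq C_{i+1}$ is equivalent to the diagonal step of $p_{ii}$ landing strictly to the right of where $p_{i+1,i+1}$ departs, and for (ii) you carry out the same row-by-row comparison of $\wt(p_{ii})$ with the contribution of row $i$ of $C$, invoking Remark~\ref{segment} for the leftover $\left(\tfrac{1}{\alpha}\right)^{n-k-\lambda_1}$. The only difference is packaging: the paper treats the three cases $C_i=0$, $0<C_i\le\lambda_{i+1}$, $C_i>\lambda_{i+1}$ separately in words, whereas you collapse them into the closed form $\left(\tfrac{1}{\beta}\right)^{\#\{i:C_i=0\}}\left(\tfrac{1}{\alpha}\right)^{\sum_i\max(0,\,C_i-\lambda_{i+1})}$ and match both sides globally.
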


\begin{proof}\ [\textit{i.}] It is easy to see from the construction that $C_i \geq C_{i+1}$ if and only if the diagonal edge in row $i$ is strictly to the right of the diagonal edge in row $i+1$. That implies $p_{ii}$ is strictly to the northeast of $p_{i+1\ i+1}$. Since the $p_{ii}$'s are nested paths, this implies $\textbf{P}(C)$ is disjoint.

[\textit{ii.}] We prove the equality by comparing  $\wt(p_{ii})$ to the weight contribution of the segment of $C$ that is in row $i$ (including the south border of the row), and showing they are equal for each $1 \leq i \leq k$. 
\begin{itemize}
\item First, if $C_i=0$, then $\wt(p_{ii})=\frac{1}{\beta}$, and also the weight contribution of row $i$ in $C$ is $\frac{1}{\beta}$. See rows 3-6 in the example in Figure \ref{LGV_bijection}.

\item When $C_i>0$, there is no contribution of $\frac{1}{\beta}$ to the segment of $C$ in row $i$ or to $p_{ii}$, so we consider only the contribution of $\frac{1}{\alpha}$. If $0 < C_i \leq \lambda_{i+1}$, the south-step of $C$ in row $i$ does not touch the south boundary of $Y$, so there is no contribution of $\frac{1}{\alpha}$ from that segment of the path, and hence the total weight contribution is 1. Similarly, $p_{ii}$ does not contain any edges with non-unit weight and so $\wt(p_{ii})=1$. See rows 2-3 in the example in Figure \ref{LGV_bijection}.

\item If $C_i>\lambda_{i+1}$, the south-step of $C$ in row $i$ touches the south boundary of $Y$, so that segment of the path has $C_i-\lambda_{i+1}$ west-edges that coincide with the south boundary of $Y$ and thus carry the weight $\frac{1}{\alpha}$. Thus the total contribution to the weight of the segment of $C$ in row $i$ is $\left(\frac{1}{\alpha}\right)^{C_i - \lambda_{i+1}}$. By the construction, $p_{ii}$ has weight $\left(\frac{1}{\alpha}\right)^{C_i - \lambda_{i+1}}$ on its diagonal edge, and that also equals $\wt(p_{ii})$. See row 1 in the example in Figure \ref{LGV_bijection}.
\end{itemize}
From the above, for each $i$, the contribution of the weight of the segment of $C$ in row $i$ equals $\wt(p_{ii})$. By Remark \ref{segment}, we have excluded from $\textbf{P}(C)$ the contribution of the weight of the segment of $C$ that lies to the northeast of $Y$. Consequently, we have $\pwt(C)=\left(\frac{1}{\alpha}\right)^{n-k-\lambda_i}\wt(\textbf{P}(C))$ as desired.
\end{proof}




\subsection{Proof of Theorem \ref{deter}}\label{proof}

We make the simple observation that a $k$-path $(P_i,\ldots,P_k)$ from the $\textbf{e}$ to $\textbf{v}$ is disjoint if and only if each path $P_i$ is from $e_i$ to $v_i$. As before, let $w_{ij}=\sum_{p \in \mathcal{P}_{ij}} \wt(p)$ for $\mathcal{P}_{ij}$ the collection of paths from $e_i$ to $v_j$. Then from the bijection above and from Theorem \ref{LGV}, we obtain
\[
\sum_C \pwt(C) = \left(\frac{1}{\alpha}\right)^{n-k-\lambda_i} \sum_{\textbf{P}} \wt(\textbf{P}) = \left(\frac{1}{\alpha}\right)^{n-k-\lambda_i} \det \left( w_{ij} \right)_{1 \leq i,j \leq k},
\]
where $C$ ranges over the Catalan tableaux constrained by $Y$, and $\textbf{P}$ ranges over the disjoint $k$-paths from $\textbf{e}$ to $\textbf{v}$ on $\tilde{Y}$.



It is not difficult to check that $w_{ij}$ for $i,j>0$ equals precisely the entry $A_{ij}$ from Theorem \ref{deter}. We describe the calculations below.

Consider the paths from $e_i$ to $v_j$ that have weight generating function $w_{ij}$. First, if $i>j+1$, there are zero such paths since all paths can only take east and southeast steps. Next, if $i=j+1$, there is exactly one path, namely the one that takes only horizontal steps from $e_i$, and so the weight on that path is 1, and thus $w_{i,i-1}=1$. Finally, assume $i\leq j$. Then any path in $\mathcal{P}_{ij}$ takes $j-i+1$ southeast steps, of which at most one step could have a weight of $\frac{1}{\beta}$, and at most one other step could have a weight of $\left(\frac{1}{\alpha}\right)^{\ell}$ for some $\ell>0$. Thus we count four cases for paths in $\mathcal{P}_{ij}$: 
\begin{enumerate}
\item \emph{A path has all its steps of weight 1.} The path necessarily takes the first step east and goes to the right-most vertex of parallelogram number $\lambda_{i+1}$ in the $i$th row. This can happen in ${\lambda_{i+1} \choose j-i+1}$ ways, and every such path has weight 1.

\item \emph{A path has one step of weight $\frac{1}{\beta}$ and the rest of weight 1.} The path necessarily takes the first step southeast and goes to the right-most vertex of parallelogram number $\lambda_{i+1}$ in the $i$th row. This can happen in ${\lambda_{i+1} \choose j-i+1}$ ways, and every such path has weight $\frac{1}{\beta}$.

\item \emph{A path has one step of weight $\left(\frac{1}{\alpha}\right)^{\ell}$ and the rest of weight 1.} The path necessarily takes the first step east and goes to the right-most vertex of parallelogram number $\lambda_{i+1}+\ell-1$ in row $i-1$. This can happen in ${\lambda_{i+1}+\ell \choose j-i}$ ways, and every such path has weight $\left(\frac{1}{\alpha}\right)^{\ell}$, where $1\leq\ell\leq\lambda_i-\lambda_{i+1}$.

\item \emph{A path has one step of weight $\frac{1}{\beta}$, one step of weight $\left(\frac{1}{\alpha}\right)^{\ell}$, and the rest of weight 1.} The path necessarily takes the first step southeast and goes to the right-most vertex of parallelogram number $\lambda_{i+1}+\ell-1$ in row $i-1$. This can happen in ${\lambda_{i+1}+\ell \choose j-i-1}$ ways, and every such path has weight $\frac{1}{\beta}\left(\frac{1}{\alpha}\right)^{\ell}$, where $1\leq\ell\leq\lambda_i-\lambda_{i+1}$.
\end{enumerate}

We combine the above to obtain $A_{\lambda} = \left(w_{ij}\right)_{1 \leq i,j \leq k}$ as desired.

Finally, if $C$ is the Catalan path corresponding to the Catalan tableau $T$, since $\wt(T)= \wt(C) = (\alpha\beta)^n \pwt(C) = \beta^n\alpha^{k+\lambda_1} \wt(\textbf{P}(C))$, we obtain the desired formula.\qed

\noindent \textbf{Acknowledgement.} The author gratefully acknowledges Xavier G.~Viennot for enlightening conversations that inspired this work.







\begin{thebibliography}{99}


\bibitem{williams2007} S.~Corteel and L.~Williams. Tableaux combinatorics for the asymmetric exclusion process, Advances in Applied Mathematics, Volume 39, Issue 3, 293--310 (2007).

\bibitem{williams2011} S.~Corteel and L.~Williams. Tableaux combinatorics for the asymmetric exclusion process and Askey-Wilson polynomials. Duke Math. J., 159: 385--415, (2011).

\bibitem{derrida} B.~Derrida, M.~Evans, V.~Hakim, V.~Pasquier. Exact solution of a 1D asymmetrix exclusion model using a matrix formulation, J. Phys. A: Math. Gen. 26, 1493--1517 (1993).

\bibitem{mandelshtam} O.~Mandelshtam. A Determinantal Formula for Catalan Tableaux and TASEP Probabilities, J. Combin. Theory Ser. A (2015).

\bibitem{duchi} E.\ Duchi and G.\ Schaeffer. A combinatorial approach to jumping particles, J. Combin. Theory Ser. A 110, 1--29 (2005).


\bibitem{viennot} X.\,G.~Viennot. Canopy of binary trees, Catalan tableaux and the asymmetric exclusion process, FPSAC 2007, Formal Power Series and Algebraic Combinatorics (2007).

\bibitem{slides} X.\,G.~Viennot. Alternative tableaux, permutations and partially asymmetric exclusion process. Workshop ``Statistical Mechanics and Quantum-Field Theory Methods in Combinatorial Enumeration'', Isaac Newton Institute for Mathematical Science, Cambridge, 23 April 2008, video and slides available at: \url{http://sms.cam.ac.uk/media/1004}. 

\bibitem{slides2}  X.\,G.~Viennot. Forme des permutations, chemins et profil des arbres binaires, 52\`{e}me SLC, LascouxFest, Domaine Saint Jacques, Otrott, Mars 2004, slides available at: \url{http://www.xavierviennot.org/xavier/exposes_files/LascouxFest.pdf}.



\bibitem{nadeau} P.~Nadeau. The structure of alternative tableaux. J. Combin. Theory
Ser. A 118, no. 5, 1638--1660 (2011).

\bibitem{treelike} J.~Aval, A.~Boussicault, P.~Nadeau. Tree-like tableaux. Electron. J. Combin. 20, no. 4, Paper 34, 24 pp. (2013).

\bibitem{lgv} I.~Gessel, X.\,G.~Viennot. Determinants, Paths and Plane Partitions, Brandeis University, 36p. (1989).




\end{thebibliography}
\end{document}